\newcommand{\defstyle}[1]{\textbf{#1}}
\newcommand{\myprob}[1]{\mathbb P \left[ #1 \right]}
\newcommand{\omid}[1]{\mathbb E \left[ #1 \right]}
\newcommand{\omidCond}[2]{\mathbb E \left[ #1 \left| #2 \right. \right]}
\newcommand{\norm}[1]{\left| #1 \right|}
\newcommand{\identity}[1]{1_{#1}}
\newcommand{\bs}[1]{\boldsymbol{#1}}
\newcommand{\del}[1]{}
\newcommand{\unwritten}[1]{}
\newcommand{\restrict}[2]{{
		\left.\kern-\nulldelimiterspace 
		#1 
		\vphantom{\big|} 
		\right|_{#2} 
}}
\theoremstyle{theorem}
\newtheorem{theorem}{Theorem}[section]
\newtheorem{lemma}[theorem]{Lemma}
\theoremstyle{definition}
\newtheorem{definition}[theorem]{Definition}
\theoremstyle{definition}
\newtheorem{remark}[theorem]{Remark}
\newtheorem{algorithm}[theorem]{Algorithm}
\theoremstyle{theorem}
\numberwithin{equation}{section}
\let\orgdescriptionlabel\descriptionlabel
\renewcommand*{\descriptionlabel}[1]{%
	\let\orglabel\label
	\let\label\@gobble
	\phantomsection
	\edef\@currentlabel{#1}%
	\let\label\orglabel
	\orgdescriptionlabel{#1}%
}
\begin{document}

\title{An Improved Lower Bound on the Largest Common Subtree of Random Leaf-Labeled Binary Trees}

\author{Ali Khezeli \footnote{Inria Paris, ali.khezeli@inria.fr} \footnote{Department of Applied Mathematics, Faculty of Mathematical Sciences, Tarbiat Modares University, P.O. Box 14115-134, Tehran, Iran, khezeli@modares.ac.ir}}

\maketitle

\begin{abstract}
	It is known that the size of the largest common subtree (i.e., the maximum agreement subtree) of two independent random binary trees with $n$ given labeled leaves is of order between $n^{0.366}$ and $n^{1/2}$. We improve the lower bound to order $n^{0.4464}$ by constructing a common subtree recursively and by proving a lower bound for its asymptotic growth. The construction is a modification of an algorithm proposed by D. Aldous by splitting the tree at the centroid and by proceeding recursively.
	
	\textbf{Keywords.} random tree, maximum agreement subtree.
\end{abstract}


\section{Introduction}


In some models of random combinatorial structures, one might also be interested in the largest common substructure of two independent instances. This is a measure of similarity of the two instances. A highly investigated example is the largest increasing subsequence of a random permutation, which can be regarded as the largest common substructure of two random total orders (see~\cite{aldous} for a discussion of this problem).
In this paper, we study the largest common subtree of two random binary trees. 
The trees considered here are \textit{leaf-labeled binary trees of size $n$}, which are binary trees with $n$ leaves together with a  numbering of the leaves with distinct numbers $1,\ldots,n$. 
By considering the trees up to label-preserving isomorphisms, one can define the notion of a \textit{uniform random leaf-labeled binary tree} with $n$ leaves. 
Let $\bs T$ and $\bs T'$ be two independent uniform random trees as above. Let $\kappa(\bs T, \bs T')$ be the size of the largest common subtree, which is, the largest subset $A$ of $\{1,\ldots,n\}$ such that $\bs T$ and $\bs T'$ induce the same binary trees with leaf set $A$. See Section~\ref{sec:definitions} for more formal definitions.

It was shown in~\cite{Pi21} that $\omid{\kappa(\bs T, \bs T')}$ is of order at most $n^{1/2}$ using the first moment method. For the lower bound, \cite{BeSt15} showed that $\omid{\kappa(\bs T, \bs T')}$ is of order at least $n^{1/8}$. This bound was improved by D. Aldous in~\cite{aldous} to $n^{\beta}$, where $\beta=(\sqrt 3-1)/2\sim 0.366$. In this paper, we improve the lower bound to order $n^{0.4464}$ (improving the upper bound remains an open problem; see Open Problem~2 of~\cite{aldous}). More precisely,

\begin{theorem}
	\label{thm:main}
	For the constant $0.4464<\beta<0.4465$ given by~\eqref{eq:beta} below and for every $\beta'<\beta$, one has $\omid{\kappa(\bs T, \bs T')}=\Omega(n^{\beta'})$.
\end{theorem}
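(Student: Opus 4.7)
The plan is to analyze a recursive construction that improves on Aldous's scheme by splitting each tree at its \emph{centroid edge} instead of at the root. For a leaf-labeled binary tree on $n$ leaves, the centroid edge is the edge whose removal gives two subtrees whose leaf sets are as balanced as possible; its smaller side has size $M \in [\lceil n/3 \rceil, \lfloor n/2 \rfloor]$. Since centroid splits are far more balanced than typical root splits of a uniform random binary tree, the recursion they generate closes at a larger exponent than Aldous's $(\sqrt 3 - 1)/2$.

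Given $\bs T$ and $\bs T'$ on the same label set of size $n$, the algorithm would compute the centroid-edge partitions $\{A_1, A_2\}$ of $\bs T$ and $\{A'_1, A'_2\}$ of $\bs T'$. For one of the two matchings $i \in \{1,2\}$, it restricts $\bs T$ and $\bs T'$ to each of the intersections $A_1 \cap A'_i$ and $A_2 \cap A'_{3-i}$, recurses on each pair, and returns the union of the two common subtrees produced. The output is a common subtree of $\bs T, \bs T'$ because taking an induced binary subtree commutes with nested restriction. Crucially, by exchangeability of the labels, conditional on the centroid-size statistics, the restrictions of $\bs T$ and of $\bs T'$ to any set produced by this procedure are \emph{independent uniform} random leaf-labeled binary trees on their label sets; hence the recursion preserves the model.

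Writing $f(n) := \omid{\kappa(\bs T_n, \bs T'_n)}$ and choosing the matching $i$ to maximize the expected recursive gain, the algorithm yields
\[
f(n) \ \ge\ \omid{f(U_1) + f(U_2)},
\]
where, conditional on the centroid sizes $M, M'$ and the matching, $U_1, U_2$ are hypergeometric intersection sizes with $U_1 + U_2 \le n$ and mean close to $n$. I would extract $\beta$ as the largest exponent for which
\[
\omid{(U_1/n)^\beta + (U_2/n)^\beta} \ \ge\ 1
\]
holds in the $n \to \infty$ limit; this is the content of~\eqref{eq:beta}. A standard induction on $n$ then gives $f(n) \ge c\, n^{\beta'}$ for every $\beta' < \beta$: assuming the bound for all $k < n$, the recursion yields
\[
f(n) \ \ge\ c\, n^{\beta'}\, \omid{(U_1/n)^{\beta'} + (U_2/n)^{\beta'}},
\]
and for $\beta' < \beta$ the expectation exceeds $1$ for large $n$, closing the induction; the small-$n$ regime is absorbed into the constant $c$.

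The main obstacle is to put the defining relation~\eqref{eq:beta} in a form explicit enough to bound $\beta$ numerically. This needs (i) the limiting joint distribution of the normalized centroid-edge sizes $M/n$ for independent uniform random binary trees, obtainable from the subtree-size profile and ultimately the Brownian continuum random tree limit; (ii) integrating out the hypergeometric intersection step; and (iii) choosing the matching rule $i$ optimally. The resulting one-dimensional fixed-point equation for $\beta$ is then evaluated numerically to give the stated range $0.4464 < \beta < 0.4465$.
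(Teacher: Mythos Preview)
The proposal has a genuine gap at the gluing step. You claim that if $S_1\subseteq A_1\cap A'_i$ and $S_2\subseteq A_2\cap A'_{3-i}$ are the common subtrees returned by the two recursive calls, then $S_1\cup S_2$ is a common subtree of $\bs T$ and $\bs T'$. This is false: knowing $\restrict{\bs T}{S_1}=\restrict{\bs T'}{S_1}$ and $\restrict{\bs T}{S_2}=\restrict{\bs T'}{S_2}$, together with the fact that each of $\bs T,\bs T'$ has an edge separating $S_1$ from $S_2$, does \emph{not} determine $\restrict{\bs T}{S_1\cup S_2}$. One must also record \emph{where} on each side the separating edge attaches, i.e.\ the position of a root. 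For instance, with $\norm{S_1}=\norm{S_2}=3$ there are nine distinct binary trees on $S_1\cup S_2$ having an edge separating $S_1$ from $S_2$, all inducing the same (unique) tree on each $S_j$. ``Commuting with nested restriction'' does not address this.

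The paper handles exactly this issue by carrying distinguished leaves through the recursion: it splits at a centroid \emph{branch point} (not an edge), turning that point into a new leaf in each of the three pieces; the recursive common subtrees are required to contain these leaves, and gluing is then well defined by identifying them. The price is that after one step the pieces are \emph{rooted}, and splitting a rooted tree at its centroid produces one \emph{doubly-rooted} piece and two rooted pieces, while splitting a doubly-rooted tree at its semi-centroid produces two doubly-rooted pieces and one rooted piece. The recursion therefore unavoidably couples two tree types, and~\eqref{eq:beta} is a \emph{system} of two equations linked by the ratio $\alpha=C_2/C_1$, with branch-size laws given by the Dirichlet distributions of Lemmas~\ref{lem:dirichlet1} and~\ref{lem:dirichlet2}. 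Your single scalar fixed-point equation in two hypergeometric fractions cannot reproduce this $\beta$; even if you repaired the edge-split by tracking roots, you would be forced into the same two-type recursion and the same system.
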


The proof uses an algorithm to construct a common subtree recursively.
We will also prove the same result for rooted and doubly-rooted trees (Theorem~\ref{thm:induction}).

Some other variants of the model have also been investigated in the literature. Originally, the concept of the largest common subtree was introduced in~\cite{FiGo85} (under the name \textit{maximum agreement subtree} or MAST) to measure the similarity of phylogenetic trees. In the worst case, the MAST of two leaf-labeled rooted binary trees on $n$ leaves is of order $\Theta(\sqrt{\log n})$ (\cite{Ma20}). For \textit{balanced} labeled binary trees, the lower bound is improved to order $n^{1/6}$ in~\cite{deterministictrees} (not that by disregarding the labels, there is a single balanced binary tree of a given size). 
In the random case, \cite{MiSu19} proves the exact rate of $n^{1/2}$ for the expected size of MAST of a random pair of rooted binary trees that have a common random shape (not necessarily balanced). In this model, the first tree is a random rooted binary tree and the second tree is obtained by a random permutation of the leaves.


The structure of this paper is as follows. In the rest of the introduction, an outline of the algorithm and a heuristic analysis are provided. Section~\ref{sec:definitions} provides the definitions and some lemmas regarding splitting the trees at their centroids. The formal algorithm and the proof of the main theorem is then given in Section~\ref{sec:proof}.

\subsection{Outline of the Algorithm}

The proof of Theorem~\ref{thm:main} is by improving the algorithm of~\cite{aldous} to construct a common subtree recursively. The general structure of the algorithm of~\cite{aldous} can be summarized as follows. 
\begin{algorithm}[Aldous, 2022]
	\label{alg:general}
	Given two binary trees $t$ and $t'$ on the same leaf set,
	\begin{enumerate}[(i)]
		\item \label{alg:general:branch}
		Choose branch points $b$ and $b'$ in $t$ and $t'$ respectively and match $b$ to $b'$.
		\item \label{alg:general:sort} By splitting at $b$, $t$ is decomposed into three subtrees (see Figure~\ref{fig:splitting}). Choose a \textit{suitable} order for the subtrees and denote their leaf sets by $A_1,A_2$ and $A_3$. By doing the same for $t'$, the leaves are partitioned into $A'_1, A'_2$ and $A'_3$. 
		\item \label{alg:general:recursion} For each $i$, two binary trees are induced on the leaf set $A_i\cap A'_i$. Recursively, construct a common subtree of these two trees that contains the distinguished leaf corresponding to $b$ (and any other distinguished leaf from the previous stages). Join the three resulting subtrees to obtain a common subtree of $t$ and $t'$.
	\end{enumerate}
\end{algorithm}

There are some choices to be made in this general sketch. In~\cite{aldous}, the branch point $b$ in the first stage is selected by choosing three random leaves and considering the branch point separating them. In the next stages, all trees have two distinguished leaves and the branch point is chosen by selecting a third leaf randomly. The crucial facts that allow the analysis of the algorithm are: (1) the distribution of the sizes of the three branches converges to a known distribution as the number of leaves grows, (2) conditioned on the sizes, the distribution of the three leaf sets $(A_1,A_2,A_3)$ is uniform and (3) conditioned on the leaf sets, the three subtrees of $t$ are uniform and independent. Using these facts, a heuristic identity is obtained in~\cite{aldous} for the asymptotic growth (see Subsection~\ref{subsec:heuristic} below). To obtain a formal proof, \cite{aldous} controls the convergence and uses a martingale argument. 

Facts (2) and (3) above hold for other choices in the algorithm as well, as long as the probability of choosing a branch point in step~\eqref{alg:general:branch} is a function of the sizes of the three branches. It is suggested in~\cite{aldous} to choose the \textit{centroid} of the trees in step~\eqref{alg:general:branch} of the algorithm and to sort the branches by their sizes in step~\eqref{alg:general:sort}, since this would result in throwing out less leaves in step~\eqref{alg:general:recursion}.
However, there are some issues in continuing the recursion (as partly mentioned in Subsection~4.2 of~\cite{aldous}), which are discussed below. 
The first issue is that sorting by size is only possible in the first stage of the recursion. In the second stage, when one splits the subtree with leaf set $A_i\cap A'_i$ into three pieces, one of the pieces contains $b$ and it must be matched to the piece containing $b'$ in the other tree. Also, this piece has two distinguished leaves, namely $b$ and $b_1$ (which are branch points in $t$). This causes another constraint on the third stage of the recursion. In addition, the next branch point should separate $b$ and $b_1$, and hence, the centroid cannot necessarily be chosen. Here, we propose to select the \textit{semi-centroid} of the path connecting $b$ to $b_1$, which will be defined similarly to the centroid. Using this, we obtain a full recursive algorithm involving three types of trees: Those with 0, 1 or 2 distinguished leaves, which are called \textit{non-rooted}, \textit{rooted} and \textit{doubly-rooted} here (non-rooted trees appear only in the first stage). The formal algorithm will be presented in Subsection~\ref{subsec:alg}. In this algorithm, despite what is mentioned in Subsection~4.2 of~\cite{aldous}, the number of constraints does not increase in the next stages. This allows us to analyze the algorithm heuristically (discussed in Subsection~\ref{subsec:heuristic}), which results in a system of equations. The exponent $\beta$ is obtained by solving these equations numerically. For a rigorous proof, we simplify the proof of~\cite{aldous} significantly 
and extend it to the new algorithm. In particular, no martingale argument is needed and the proof is by a simple induction on $n$.

The value of $\beta$ found in this method is less than the upper bound $0.485\ldots$ given in Section~4.2 of~\cite{aldous} for two reasons: One is that sorting by size is not always possible and the other is that the semi-centroid is sometimes used instead of the true centroid (which is different from the centroid with an asymptotically non-vanishing probability). However, by the above arguments, we think that this is the best bound one can expect from using the centroid if one desires that the trees in each stage are uniform and independent. 
In addition, Aldous remarks that the bound given by the centroid method is expected to be less than (but close to) the true growth exponent of $\omid{\kappa(\bs T, \bs T')}$. The reason is that, since $\omid{\kappa(\bs T, \bs T')}$ is expected to converge to a nontrivial random variable after normalization (Conjecture~6 of~\cite{aldous}), there is no guarantee that sorting by size in step~\eqref{alg:general:sort} of Algorithm~\ref{alg:general} gives the optimal rate.\footnote{Personal communication.} For the same reason, choosing the centroids in step~\eqref{alg:general:branch} might not be optimal as well.

\subsection{Heuristic analysis of the algorithm}
\label{subsec:heuristic}
For $i=0,1,2$, let $\bs T_i$ and $\bs T'_i$ be independent random binary trees on $n$ leaves, plus $i$ additional distinguished leaves. The above algorithm of splitting at the centroid constructs a common subtree of $\bs T_i$ and $\bs T'_i$ that contains the distinguished leaves. One might expect that the size of the output of the algorithm is asymptotically $\bs Z_i n^{\beta_i}$ for some constant $\beta_i$ and some random variable $\bs Z_i$ (see Conjecture~6 of~\cite{aldous}, which is for the case $i=0$). In particular, one might expect that the expected size of the output is asymptotically $C_i n^{\beta_i}$. However, there is no reason to expect $C_0,C_1$ and $C_2$ to be equal.

We follow the heuristic of~\cite{aldous} for splitting $\bs T_0$ at the centroid. The sizes of the three branches are asymptotically $(nX_1, nX_2, nX_3)$, where $(X_1,X_2,X_3)$ has a Dirichlet$(-1/2,-1/2,-1/2)$ distribution (see Definition~\ref{def:dirichlet}) on the subset of the simplex $\{(x_1,x_2,x_3)\in(\mathbb R^+)^3: \sum_i x_i=1 \}$ defined by the condition $X_1\leq X_2\leq X_3\leq \frac 12$. Similarly, the sizes of the three branches of $\bs T'_0$ are asymptotically $(nX'_1,nX'_2,nX'_3)$, independently of $\bs T_0$. Conditioned on knowing the sizes of the branches, the sizes of $A_i\cap A'_i$, $i=1,2,3$ are very concentrated around their means $(nX_1X'_1, nX_2X'_2, nX_3X'_3)$. Therefore, the recursion gives heuristically that
\begin{equation*}
	C_0 n^{\beta_0} = \omid{C_1(nX_1X'_1)^{\beta_1} + C_1(nX_2X'_2)^{\beta_1} + C_1(nX_3X'_3)^{\beta_1}}.
\end{equation*}
This gives that $\beta_0=\beta_1=:\beta$ and 
\begin{equation}
	\label{eq:rec0}
	C_0 = C_1 \omid{X_1^\beta}^2 + C_1 \omid{X_2^\beta}^2 + C_1 \omid{X_3^\beta}^2.
\end{equation}
This is different from Equation~(2) of~\cite{aldous} since the right hand side involves $C_1$, which corresponds to having a distinguished leaf after splitting (see Figure~\ref{fig:splitting}). 

\begin{figure}[t]
	\begin{center}
		\includegraphics[width=.9\textwidth]{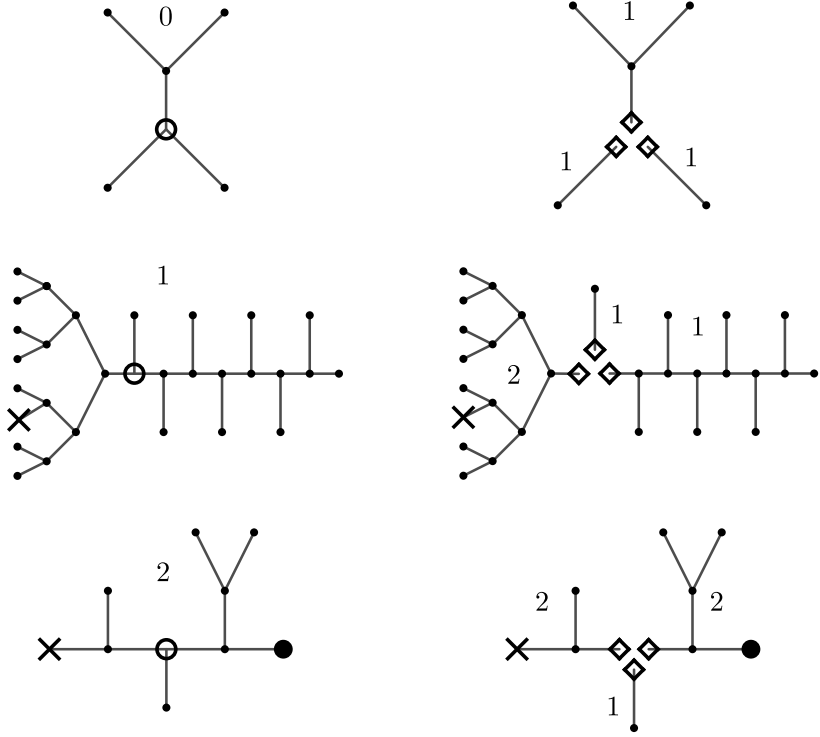}
	\end{center}
	\caption{Splitting the trees. In the left, three trees are shown. The distinguished leaves $\star$ and $\bullet$ are depicted by $\times$ and $\bullet$ respectively. The number of distinguished leaves of each tree is shown by a number. A branch point (which is the centroid or semi-centroid) is selected and shown by $\circ$. In the right, the branch point is split into three new leaves, which are now considered as distinguished leaves and shown by $\diamond$.}
	\label{fig:splitting}
\end{figure}

We can do similar calculations for splitting $\bs T_1$ at the centroid. We will show that the sizes of the branches are asymptotically $(nU_1,nU_2,nU_3)$, where $(U_1,U_2,U_3)$ has a Dirichlet$(-1/2,-1/2,1/2)$ distribution on the subset of the simplex corresponding to the conditions $U_1\leq  U_2\leq \frac 12$ and $U_3\leq \frac 12$. Here, $U_3$ corresponds to the branch containing the distinguished leaf and we have sorted the other two branches. By a similar heuristic argument, one obtains that $\beta_2=\beta$ and
\begin{equation}
	\label{eq:rec1}
	C_1 = C_1 \omid{U_1^\beta}^2 + C_1 \omid{U_2^\beta}^2 + C_2 \omid{U_3^\beta}^2.
\end{equation}
In this equation, the coefficient $C_2$ corresponds to the branch with two distinguished leaves (in fact, the heuristic gives $\beta_2\leq\beta$, and the inequality $\beta_2\geq \beta$ follows from splitting $\bs T_2$ discussed below). 

Finally, when splitting $\bs T_2$ at the semi-centroid of the two distinguished leaves, the sizes of the three branches are asymptotically $(nV_1,nV_2,nV_3)$, where $(V_1,V_2,V_3)$ has a Dirichlet$(1/2,1/2,-1/2)$ distribution on the subset of the simplex corresponding to the conditions $V_1\leq\frac 12$ and $V_2\leq \frac 12$. Here, $V_1$ and $V_2$ correspond to the branches containing the two distinguished leaves, sorting is not possible and there is no restriction on $V_3$. One finds similarly
\begin{equation}
	\label{eq:rec2}
	C_2 = C_2 \omid{V_1^\beta}^2 + C_2 \omid{V_2^\beta}^2 + C_1 \omid{V_3^\beta}^2.
\end{equation}
In this equation, the coefficient $C_1$ corresponds to the third branch which has only one distinguished leaf. By letting $\alpha:=C_2/C_1$, one can solve $\alpha$ in terms of $\beta$ in either of~\eqref{eq:rec1} and~\eqref{eq:rec2}:
\begin{equation}
	\label{eq:beta}
	\alpha=\frac{1-\omid{U_1^\beta}^2 - \omid{U_2^\beta}^2}{\omid{U_3^\beta}^2} = \frac{\omid{V_3^\beta}^2}{1-\omid{V_1^\beta}^2-\omid{V_2^\beta}^2}.
\end{equation}
By a monotonicity and continuity argument, there exists a unique solution for $\beta$. Simulation gives that $0.4464<\beta<0.4465$ and $\alpha\approx 0.859$. Although the above arguments are heuristic and don't contain a formal proof, \eqref{eq:beta} is very important for being able to use induction in the proof of the main theorem.

\section{Random Binary Trees}
\label{sec:definitions}

\subsection{The Model}
\label{subsec:model}
A binary tree $T$ is a finite tree in which every vertex has degree 1 or 3. The vertices of degree 1 are called the leaves and the other vertices are called the branch points. 
If $B$ is a subset of leaves, then $T$ induces a binary tree with leaf set $B$ naturally, which is denoted by $\restrict{T}{B}$ (first consider the subtree spanned by $B$ and then remove the vertices of degree two).
Call two binary trees with a common leaf set $A$ equivalent if there exists a graph isomorphism between them such that its restriction on $A$ is the identity; i.e., preserves the labels of the leaves. Under this equivalence relation, there are finitely many classes which are called \defstyle{non-rooted labeled binary trees} with leaf set $A$. 
Here, by a \defstyle{random binary tree} with leaf set $A$, we mean a random element of this finite set chosen uniformly.

Fix $n\geq 0$.
Define the random binary trees $\bs T_0$ (if $n\neq 0$), $\bs T_1$ and $\bs T_2$ with the uniform distribution on the set of trees with the following leaf sets:
\begin{eqnarray*}
	\bs T_0 &\text{with leaf set}& \{1,\ldots,n\},\\
	\bs T_1 &\text{with leaf set}& \{1,\ldots,n,\star\},\\
	\bs T_2 &\text{with leaf set}& \{1,\ldots,n,\star,\bullet\}.
\end{eqnarray*}
Here, $1,\ldots,n$ are called \textbf{the original leaves} and $\star$ and $\bullet$ are arbitrary elements called \textbf{the distinguished leaves}. So $\bs T_i$ has $i$ distinguished leaves. We say that $\bs T_0, \bs T_1$ and $\bs T_2$ are \defstyle{non-rooted}, \defstyle{rooted} and \defstyle{doubly-rooted} respectively.\footnote{Usually, rooted binary trees are defined such that the root has degree 2. This is in bijection with the above definition when $n>0$, since deleting the distinguished leaf results in a vertex of degree two.}
We say that the \defstyle{size} of $\bs T_i$ is the number of original leaves; i.e., $n$. 

Let $\bs T'_i$ be another random binary tree defined similarly and independently of $\bs T_i$.
If $B$ is a subset of the leaves such that $B$ contains the distinguished leaves $\star$ and $\bullet$ (if they are present) and $\restrict{(\bs T_i)}{B}$ is equivalent to $\restrict{(\bs T'_i)}{B}$, then we call the latter a \textbf{common subtree} of $\bs T_i$ and $\bs T'_i$. Define $\kappa(\bs T_i,\bs T'_i)$ to be the size of the largest common subtree of $\bs T_i$ and $\bs T'_i$.
We stress again that the distinguished leaves are contained in the subtrees but are not counted in their sizes.


We recall some of the properties of random binary trees from~\cite{aldous}.
Such trees can be constructed recursively by adding the leaves one by one and attaching them to (the middle of) one of the existing edges chosen uniformly. Since the number of edges is two times the number of leaves minus 3, one finds that the number of binary trees on $n$ given leaves is
\[
	c_n:=1\times 3\times 5 \times \cdots \times (2n-5) = \frac{(2n-5)!}{2^{n-3} (n-3)!}.
\]
Stirling's formula gives that $c_n\sim 2^{n-3/2}e^{-n}n^{n-2}$. In particular, this implies
\begin{equation}
	\label{eq:c}
	 \frac{c_{n+2}}{n!}\sim \pi^{-1/2}2^n n^{-1/2},\quad \frac{c_{n+1}}{n!}\sim \pi^{-1/2}2^{n-1} n^{-3/2}.
\end{equation}
A fundamental simple property is the \textit{consistency property} expressed in the following lemma.
\begin{lemma}
	\label{lem:consistency}
	If $\bs T$ is a uniform random binary tree with leaf set $A$ and $B\subset A$, then $\restrict{\bs T}{B}$ is a uniform binary tree with leaf set $B$. If $B$ is random and independent of $\bs T$, then the same holds when conditioned on $B$.
\end{lemma}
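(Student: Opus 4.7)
The plan is to establish the first statement by a direct counting/bijection argument, and then deduce the conditional version from it by a standard disintegration. Let $|A|=n$ and $|B|=m$, and for each binary tree $S$ with leaf set $B$ let $N(S)$ denote the number of binary trees $T$ with leaf set $A$ such that $\restrict{T}{B}=S$. To conclude uniformity of $\restrict{\bs T}{B}$, it suffices to show that $N(S)$ does not depend on $S$.

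To prove this, I would invoke the recursive construction of binary trees already recalled just before the lemma: any binary tree on a set of labeled leaves is obtained in a unique way by starting from a smaller binary tree and attaching the remaining leaves one by one, each to the middle of a currently existing edge. Concretely, fix any ordering $a_1,\ldots,a_{n-m}$ of $A\setminus B$. Every tree $T$ on $A$ with $\restrict{T}{B}=S$ is produced by a unique sequence of edge choices obtained by processing $a_1,\ldots,a_{n-m}$ in order: at step $i$, the edge onto which $a_i$ is attached is the unique edge of $\restrict{T}{B\cup\{a_1,\ldots,a_i\}}$ incident to $a_i$. At step $i$ the tree being extended has $m+i-1$ leaves, hence $2(m+i-1)-3$ edges, so the total number of such sequences is
\begin{equation*}
N(S) \;=\; \prod_{i=1}^{n-m}\bigl(2(m+i-1)-3\bigr),
\end{equation*}
which is manifestly independent of $S$. (The formula also reproduces $c_n/c_m$ when summed over $S$, providing a sanity check.) Since $\bs T$ is uniform on a set of cardinality $c_n$, the probability that $\restrict{\bs T}{B}=S$ equals $N(S)/c_n$, which is the same for every $S$; hence $\restrict{\bs T}{B}$ is uniform on binary trees with leaf set $B$.

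For the conditional statement, suppose $B$ is random and independent of $\bs T$. For any fixed set $B_0$ (in the support of $B$) and any binary tree $S$ on $B_0$,
\begin{equation*}
\myprob{\restrict{\bs T}{B}=S \mid B=B_0} \;=\; \myprob{\restrict{\bs T}{B_0}=S} \;=\; \frac{1}{c_{|B_0|}},
\end{equation*}
by independence and the first part of the lemma. This is exactly the claimed conditional uniformity. The only place where care is needed is the edge cases (when $|B|\le 2$ the induced "binary tree" is degenerate), but for $|B|\ge 3$ the above counting is valid as stated, and the degenerate cases can be handled by direct inspection or absorbed into a convention. I do not anticipate any serious obstacle; the main point is simply to notice that the recursive construction implies $N(S)$ depends only on $(n,m)$, not on the shape of $S$.
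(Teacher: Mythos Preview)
Your argument is correct. The paper does not actually give a proof of this lemma: it is stated as a ``fundamental simple property'' immediately after recalling the recursive leaf-by-leaf construction, and no proof is written out. Your counting argument is precisely the natural way to justify it from that construction, and the computation $N(S)=\prod_{i=1}^{n-m}(2(m+i-1)-3)=c_n/c_m$ is right. The conditional version follows exactly as you wrote.
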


\subsection{Splitting at the Centroid}
\label{subsec:splitting}

As mentioned in Subsection~\ref{subsec:heuristic}, it is known that by splitting $\bs T_0$ at the centroid, the sizes of the three branches asymptotically have the Dirichlet$(-1/2,-1/2,-1/2)$ distribution on a specific subset of the simplex (see Definition~\ref{def:dirichlet} below). However, we do not need this since the recursion only involves rooted and doubly-rooted trees (the theorem for non-rooted trees will follow from a simple comparison with rooted trees; see Lemma~\ref{lem:coupling}). We analyze splitting these types of trees in this subsection. 

First, we analyze splitting $\bs T_2$. Let $b$ be a branch point on the path connecting the leaves $\star$ and $\bullet$. 
By replacing $b$ with three distinct leaves and connecting them to the neighbors of $b$, $\bs T_2$ is split into three binary trees $\bs S_1,\bs S_2$ and $\bs S_3$. Choose the order such that $\star$ and $\bullet$ belong to $\bs S_1$ and $\bs S_2$ respectively. 
Note that $S_3$ has one distinguished leaf (corresponding to $b$) and the other two trees have two distinguished leaves (corresponding to $b$ and one of $\star$ and $\bullet$).
Let $\bs l_i$ be the number of original leaves in $\bs S_i$. Assume $b$ is the \textbf{semi-centroid} of $\bs T_2$ from the beginning, which is uniquely defined by the conditions $\bs l_1< \frac n2$ and $\bs l_2\leq\frac n2$. 
See Figure~\ref{fig:splitting} for an illustration.

The above decomposition provides a natural bijection, which enables one to analyze the distribution of the three branches: Every partitioning of $\{1,\ldots,n\}$ into three sets $(A_1,A_2,A_3)$ with sizes $(l_1,l_2,l_3)$, such that $0\leq l_1< \frac n2$ and $0\leq l_2\leq \frac n2$, and every triple of trees $(S_1,S_2,S_3)$ with leaf sets $A_1\cup\{\star, b\}$, $A_2\cup\{\bullet,b\}$ and $A_3\cup\{b\}$ respectively (where $b$ is an abstract element here), determine a unique doubly-rooted binary tree. This implies the following lemma straightforwardly.

\begin{lemma}
	\label{lem:split2}
	By splitting $\bs T_2$ at the semi-centroid,
	\begin{enumerate}[(i)]
		\item \label{lem:split2:1} For every $l_1,l_2,l_3\in\mathbb N\cup\{0\}$ such that $\sum_i l_i=n$, $l_1< \frac n2$ and $l_2\leq \frac n2$,
		\[
		\myprob{\forall i: \bs l_i=l_i} = \frac{\binom{n}{l_1\  l_2\  l_3}\;c_{l_1+2}\;c_{l_2+2}\;c_{l_3+1}}{c_{n+2}}.
		\]
		\item Conditioned on $\forall i: \bs l_i=l_i$, the sets of original leaves of $\bs S_1,\bs S_2$ and $\bs S_3$ are uniformly at random among all possible partitions of $\{1,\ldots,n\}$ with the given sizes.
		\item Conditioned on the three leaf sets, $\bs S_1, \bs S_2$ and $\bs S_3$ are independent and uniform random binary trees, respectively having 2, 2 and 1 distinguished leaves.
	\end{enumerate}
\end{lemma}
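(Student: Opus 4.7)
The plan is to exploit the bijection sketched in the paragraph preceding the lemma: a uniform sample from the $c_{n+2}$ doubly-rooted binary trees on $\{1,\ldots,n,\star,\bullet\}$ can be translated, via splitting at the semi-centroid, into a uniform sample from tuples $((A_1,A_2,A_3),(S_1,S_2,S_3))$ satisfying the constraints $l_1<n/2$ and $l_2\leq n/2$. All three parts then follow by elementary counting.

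First I would verify that the semi-centroid is well-defined, which is where the real content lies. List the branch points on the path from $\star$ to $\bullet$ as $b_0,\ldots,b_k$, and let $l_j^{(i)}$ denote the size of branch $\bs S_j$ when splitting at $b_i$. Going from $b_i$ to $b_{i+1}$ transfers the third branch of $b_i$ to the $\star$-side, yielding the key identity
\[
l_1^{(i+1)} = l_1^{(i)} + l_3^{(i)} = n - l_2^{(i)}.
\]
Thus $l_1$ is nondecreasing and $l_2$ is nonincreasing along the path. For existence, take the first index $i$ with $l_2^{(i)}\leq n/2$; such $i$ exists since $l_2^{(k)}=0$, and at this $i$ one has $l_1^{(i)}<n/2$ because at $b_{i-1}$ (if any) we had $l_2^{(i-1)}>n/2$, forcing $l_1^{(i)}=n-l_2^{(i-1)}<n/2$. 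For uniqueness, the identity $l_1^{(i+1)}=n-l_2^{(i)}$ shows that $l_2^{(i)}\leq n/2$ and $l_1^{(i+1)}<n/2$ cannot both hold, so no two consecutive branch points can qualify, and monotonicity rules out non-consecutive ones.

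Next I would check that the decomposition map $\bs T_2 \mapsto ((A_1,A_2,A_3),(S_1,S_2,S_3))$ is a bijection onto the set of tuples obeying $l_1<n/2$ and $l_2\leq n/2$. The inverse glues the three trees by identifying the three copies of the abstract leaf $b$ into a single interior branch point; the semi-centroid check for the glued tree is just the same size-inequality argument run in reverse. Counting then gives, for each admissible $(l_1,l_2,l_3)$, exactly $\binom{n}{l_1\ \ l_2\ \ l_3}\,c_{l_1+2}\,c_{l_2+2}\,c_{l_3+1}$ preimages. Dividing by $c_{n+2}$ yields part (i); part (ii) and part (iii) follow because the uniform distribution on the trees pulls back to the uniform distribution on the Cartesian product (partition)$\times$(triple of trees), and a uniform distribution on a product set has uniform marginals and uniform conditionals.

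The main obstacle, as flagged, is the existence/uniqueness of the semi-centroid; the asymmetry between the strict inequality $l_1<n/2$ and the weak inequality $l_2\leq n/2$ is exactly what is needed to handle the borderline case where some branch contains precisely $n/2$ leaves, and its role becomes transparent once the identity $l_1^{(i+1)}=n-l_2^{(i)}$ is in hand. Everything beyond that is bookkeeping.
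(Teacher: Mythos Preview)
Your proposal is correct and follows exactly the route the paper indicates: the paper merely asserts the bijection in the paragraph before the lemma and states that the lemma follows ``straightforwardly,'' while you supply the details it omits, most notably the existence and uniqueness of the semi-centroid via the identity $l_1^{(i+1)}=n-l_2^{(i)}$ and the monotonicity of $l_1,l_2$ along the $\star$--$\bullet$ path. One small sharpening: since the third branch at any $b_i$ contains at least one original leaf, $l_1$ is in fact strictly increasing, which makes the uniqueness step even more immediate, but your weaker monotonicity already suffices.
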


One can deduce the asymptotic distribution of the three branch sizes. First, recall the following definition.
\begin{definition}
	\label{def:dirichlet}
	Let $E$ be a subset of the simplex $\{(x_1,x_2,x_3): x_1,x_2,x_3\geq 0, \sum x_i=1\}$ and $a_1,a_2,a_3\in\mathbb R$. The \defstyle{Dirichlet distribution} on $E$ with parameters $(a_1,a_2,a_3)$ is the probability measure on $E$ whose density is proportional to $\prod_i x_i^{a_i-1}$ (assuming the integral on $E$ is finite). If $a_1,a_2,a_3>0$, then the Dirichlet distribution makes sense on the whole simplex. However, negative values of $a_1,a_2,a_3$ can also be allowed if $E$ is properly chosen.
\end{definition}

Part~\eqref{lem:split2:1} of the above lemma, together with~\eqref{eq:c}, imply that

\begin{lemma}
	\label{lem:dirichlet2}
	The tuple $({\bs l_1}/{n}, {\bs l_2}/{n}, {\bs l_3}/{n})$ converges weakly to a random vector $(V_1,V_2,V_3)$ which has a Dirichlet$(1/2,1/2,-1/2)$ distribution on the subset of the simplex corresponding to $V_1\leq\frac 12$ and $V_2\leq \frac 12$. 
\end{lemma}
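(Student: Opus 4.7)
The plan is to start from the exact probability formula in Lemma~\ref{lem:split2}(i) and apply the Stirling-type asymptotics~\eqref{eq:c} to each of its factors, obtaining a pointwise asymptotic for the PMF, then upgrade this to weak convergence in a standard way.

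Concretely, one rewrites the formula in Lemma~\ref{lem:split2}(i) as a product of the ratios $c_{l_i+2}/l_i!$ (for $i = 1, 2$), $c_{l_3+1}/l_3!$, and $n!/c_{n+2}$. Each of these ratios fits directly into~\eqref{eq:c}: the first two give $\pi^{-1/2} 2^{l_i} l_i^{-1/2}$, the third gives $\pi^{-1/2} 2^{l_3 - 1} l_3^{-3/2}$, and the last gives $\pi^{1/2} 2^{-n} n^{1/2}$. Using $l_1 + l_2 + l_3 = n$ to collapse the powers of $2$, and combining the three $\pi^{-1/2}$ factors with the $\pi^{1/2}$ factor, one obtains (with $x_i := l_i / n$)
\[
\myprob{\forall i: \bs l_i = l_i} \sim \frac{1}{2\pi n^2}\, x_1^{-1/2}\, x_2^{-1/2}\, x_3^{-3/2},
\]
uniformly for $(x_1, x_2)$ in any compact subset of the open region $\{0 < x_1 < 1/2,\ 0 < x_2 < 1/2,\ x_1 + x_2 < 1\}$. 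The factor $x_1^{-1/2} x_2^{-1/2} x_3^{-3/2}$ is precisely the (unnormalized) Dirichlet$(1/2, 1/2, -1/2)$ density from Definition~\ref{def:dirichlet}.

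To pass from the pointwise asymptotic to weak convergence, a standard Riemann-sum argument gives, for any rectangle $R$ compactly contained in the open region,
\[
\myprob{(\bs l_1/n,\, \bs l_2/n)\in R} \longrightarrow \int_R \frac{1}{2\pi}\, x_1^{-1/2}\, x_2^{-1/2}\, x_3^{-3/2}\, dx_1\, dx_2.
\]
The limit density is integrable over the whole constraint region: the exponents $-1/2$ make the singularities at $x_1 = 0$ and $x_2 = 0$ integrable, and at the corner $(1/2, 1/2)$ (where $x_3 = 0$) the substitution $u = 1/2 - x_1$, $v = 1/2 - x_2$ followed by polar coordinates turns $(u+v)^{-3/2}\,du\,dv$ into $r^{-1/2}\,dr\,d\theta$, which is integrable near $r = 0$.

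The main obstacle is to show that no mass escapes to the boundary; equivalently, that the density above integrates to exactly $1$. I would handle this by a tightness argument: using the asymptotic together with the integrability above to bound $\myprob{(\bs l_1/n, \bs l_2/n)\in B_\epsilon}$ for a small boundary neighborhood $B_\epsilon$ by the corresponding density integral plus $o(1)$, which can be made arbitrarily small by choosing $\epsilon$ small. Combined with the fact that the total probability is $1$, this forces the normalization of the limit and yields the claimed weak convergence to Dirichlet$(1/2, 1/2, -1/2)$ on the stated region.
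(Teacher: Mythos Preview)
Your proposal is correct and follows exactly the route the paper indicates: the paper simply states that the lemma follows from Part~\eqref{lem:split2:1} of Lemma~\ref{lem:split2} together with the Stirling asymptotics~\eqref{eq:c}, and you have filled in precisely those details (the pointwise PMF asymptotic, the identification of the Dirichlet density, and the passage to weak convergence). Your computation of the constant $\tfrac{1}{2\pi n^2}$ and the exponents is correct, and the integrability/tightness discussion, while more explicit than anything in the paper, is the standard way to complete such an argument.
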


Now, we discuss splitting $\bs T_1$.
Let $b$ be the centroid of $\bs T_1$; i.e., a branch point such that after splitting at $b$, each of the three branches has at most half of the leaves. If there are two such vertices (which have to be neighbors and this happens only when one branch has exactly half of the leaves), choose the one farthest from the distinguished leaf $\star$. By splitting at $b$, one obtains 3 trees which are denoted by $\bs S_1,\bs S_2$ and $\bs S_3$ again (see Figure~\ref{fig:splitting}). Choose the order such that $\bs S_3$ contains $\star$. Note that $S_3$ has two distinguished leaves and the other two trees have one distinguished leaf. Let $\bs k_i$ be the number of original leaves (i.e., excluding $\star$) in $\bs S_i$. Choose the order of $\bs S_1$ and $\bs S_2$ such that $\bs k_1\leq \bs k_2$. For the rare cases in which $\bs k_1=\bs k_2$, choose one of the two orders randomly.
Similarly to the previous case, this decomposition provides an (almost-) bijection with the difference that it is a 2 to 1 map on the cases $\bs k_1=\bs k_2$. This implies the following result.

\begin{lemma}
	\label{lem:split1}
	By splitting $\bs T_1$ at the centroid,
	\begin{enumerate}[(i)]
		\item \label{lem:split1:1} For every $k_1,k_2,k_3\in\mathbb N\cup\{0\}$ such that $\sum_i k_i=n$, $k_1\leq k_2<\frac {n+1}2$ and $k_3\leq \frac {n-1}2$,
		\[
			\myprob{\forall i: \bs k_i=k_i} = \frac{\delta\binom{n}{k_1\  k_2\  k_3}\;c_{k_1+1}\;c_{k_2+1}\;c_{k_3+2}}{c_{n+1}},
		\]
		where $\delta=1$ if $k_1<k_2$ and $\delta=\frac 12$ otherwise.
		\item \label{lem:split1:2} Conditioned on $\forall i: \bs k_i=k_i$, the sets of original leaves of  $\bs S_1,\bs S_2$ and $\bs S_3$ are uniformly at random among all possible partitions of $\{1,\ldots,n\}$ with the given sizes.
		\item \label{lem:split1:3} Conditioned on the three leaf sets, $\bs S_1, \bs S_2$ and $\bs S_3$ are independent and uniform random binary trees, respectively having 1, 1 and 2 distinguished leaves.
	\end{enumerate}
\end{lemma}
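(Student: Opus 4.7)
The plan is to mirror the proof of Lemma~\ref{lem:split2} by setting up a near-bijection between rooted binary trees and the combinatorial data of the three branches produced by splitting at the centroid. The two new features compared to the doubly-rooted case are the tie-breaking rule defining the centroid and the random choice of order between $\bs S_1$ and $\bs S_2$ when $\bs k_1=\bs k_2$. I expect the main work to lie in pinning down the exact constraints on $(k_1,k_2,k_3)$ that come from the tie-breaking rule.

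First I would analyze when the centroid of $\bs T_1$ is ambiguous. Since $\bs T_1$ has $n+1$ leaves in total (counting $\star$), a branch point is a centroid iff each of its three branches contains at most $(n+1)/2$ leaves. Two adjacent branch points $b$ and $b'$ are both centroids precisely when the edge $(b,b')$ splits $\bs T_1$ into two equal halves of $(n+1)/2$ leaves, which forces one branch at $b$ to have exactly $(n+1)/2$ leaves and $n$ to be odd. The tie-breaking rule selects the vertex farther from $\star$, so $b$ is chosen over $b'$ iff $b'$ lies on the path from $b$ to $\star$, i.e., iff the oversized branch is $\bs S_3$. This forces $k_1,k_2<(n+1)/2$ (if $\bs S_1$ or $\bs S_2$ had exactly $(n+1)/2$ leaves, the tie-breaker would prefer a neighboring vertex) and allows $k_3+1\leq (n+1)/2$ with equality permitted, which is the constraint $k_3\leq(n-1)/2$ in the statement.

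Next I would set up the bijection. Given an ordered partition $(A_1,A_2,A_3)$ of $\{1,\ldots,n\}$ with sizes $k_1\leq k_2$ satisfying the above constraints, together with binary trees $(S_1,S_2,S_3)$ on the leaf sets $A_1\cup\{b\}$, $A_2\cup\{b\}$, $A_3\cup\{b,\star\}$ (where $b$ is an abstract element), one recovers a unique $\bs T_1$ with centroid $b$ by identifying the three copies of $b$. The map is a bijection when $k_1<k_2$; when $k_1=k_2$, it is two-to-one because the roles of $\bs S_1$ and $\bs S_2$ can be swapped without changing the underlying tree. The factor $\delta=1/2$ in part~\eqref{lem:split1:1} accounts for this: in the equal-size case, each of the two orderings contributes probability $1/(2c_{n+1})$ to the reconstructed tree by the random choice of order.

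From this bijection, part~\eqref{lem:split1:1} follows by straightforward counting: there are $\binom{n}{k_1\ k_2\ k_3}$ partitions of $\{1,\ldots,n\}$ with the given sizes and $c_{k_1+1}\,c_{k_2+1}\,c_{k_3+2}$ triples of trees on the augmented leaf sets, giving the claimed probability. Parts~\eqref{lem:split1:2} and~\eqref{lem:split1:3} then follow from the symmetry of this count: conditioning on the sizes, the partition $(A_1,A_2,A_3)$ is uniform over all ordered partitions of the given shape, and conditioning further on the partition, the triple $(\bs S_1,\bs S_2,\bs S_3)$ is uniform, hence the three trees are independent and uniform with the stated numbers of distinguished leaves. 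The main obstacle is the tie-breaking analysis sketched above; everything else is routine counting once the bijection is in place.
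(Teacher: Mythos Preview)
Your proposal is correct and follows essentially the same approach as the paper: the paper simply states that the decomposition gives an (almost-)bijection, 2-to-1 on the cases $\bs k_1=\bs k_2$, and declares the result. Your write-up supplies the details the paper omits, in particular the tie-breaking analysis that pins down the constraints $k_1\le k_2<(n+1)/2$ and $k_3\le(n-1)/2$, and is otherwise the same bijection-plus-counting argument.
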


One can show that the probability of $\bs k_1=\bs k_2$ converges to zero. Therefore, similarly to the previous case, part~\eqref{lem:split1:1} of the above lemma and~\eqref{eq:c} imply
\begin{lemma}
	\label{lem:dirichlet1}
	The tuple $({\bs k_1}/{n}, {\bs k_2}/{n}, {\bs k_3}/{n})$ converges weakly to a random vector $(U_1,U_2,U_3)$ which has a Dirichlet$(-1/2,-1/2,1/2)$ distribution on the subset of the simplex corresponding to $U_1\leq  U_2\leq \frac 12$ and $U_3\leq \frac 12$. 
\end{lemma}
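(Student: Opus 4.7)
The plan is to insert the explicit formula of Lemma~\ref{lem:split1}(i) into the Stirling asymptotics~\eqref{eq:c}, obtain pointwise convergence of the scaled point masses to a density, identify that density as the Dirichlet$(-1/2,-1/2,1/2)$ density on the stated region, and then upgrade pointwise convergence to weak convergence using compactness of the simplex.

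First I would rewrite the multinomial coefficient as $n!/(k_1!\,k_2!\,k_3!)$ and apply~\eqref{eq:c} in the form $c_{k+1}/k!\sim \pi^{-1/2}2^{k-1}k^{-3/2}$ to the $\bs S_1$, $\bs S_2$ and the denominator $c_{n+1}$ factors, and $c_{k+2}/k!\sim \pi^{-1/2}2^{k}k^{-1/2}$ to the $\bs S_3$ factor. Using $k_1+k_2+k_3=n$, the powers of $2$ telescope, and a short calculation gives, for any sequence of triples with $k_i/n\to x_i$ and $0<x_1<x_2<1/2$, $0<x_3<1/2$,
\[
n^2\,\myprob{\forall i:\bs k_i=k_i}\;\longrightarrow\;\frac{1}{2\pi}\,x_1^{-3/2}\,x_2^{-3/2}\,x_3^{-1/2},
\]
with $\delta=1$ eventually. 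The right-hand side is, up to normalization, exactly the Dirichlet$(-1/2,-1/2,1/2)$ density on the simplex, and the constraints $k_1\leq k_2<(n+1)/2$ and $k_3\leq(n-1)/2$ pass in the limit to $x_1\leq x_2\leq 1/2$ and $x_3\leq 1/2$. The exceptional locus $\{k_1=k_2\}$ where $\delta=1/2$ consists of only $O(n)$ triples of probability $O(n^{-2})$ each, contributing total mass $O(n^{-1})\to 0$, and so does not affect the limit.

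To upgrade to weak convergence, I would note that all laws $\mu_n:=\mathrm{Law}(\bs k_1/n,\bs k_2/n,\bs k_3/n)$ are supported on the compact simplex, so the family is tight. For any bounded continuous test function $f$ with support in the interior of the region, the sum $\sum f(k_i/n)\,\myprob{\forall i:\bs k_i=k_i}$ is a Riemann sum of $f$ times the scaled density on a mesh of size $1/n$, which by the pointwise convergence above (uniform on compact subsets of the interior) converges to $\int f\,d\mu$ with $\mu$ the candidate Dirichlet law. Since $\mu$ gives zero mass to the boundary of the region, this agreement extends by approximation to all bounded continuous $f$ on the simplex, yielding $\mu_n\Rightarrow\mu$.

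The main obstacle is the singularity $x_1^{-3/2}$ at the corner $(0,1/2,1/2)$ (and analogously along $x_3=0$), which prevents the Riemann approximation from being uniform up to the boundary. The 2-dimensional integral is nevertheless finite because the constraint $x_3\leq 1/2$ forces $x_2\geq 1/2-x_1$, confining $x_2$ to a strip of width $O(x_1)$ and giving $\int_0^{1/2} O(x_1^{-1/2})\,dx_1<\infty$. To control the corresponding discrete boundary contribution, one performs a direct tail estimate using the same asymptotics to show $\myprob{\bs k_1<\varepsilon n}=O(\sqrt{\varepsilon})$ uniformly in $n$, and similarly for $\bs k_3$; the boundary contribution then vanishes in the iterated limit $n\to\infty$ followed by $\varepsilon\to 0$, completing the argument.
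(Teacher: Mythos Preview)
Your proposal is correct and follows essentially the same route as the paper: plug the explicit formula of Lemma~\ref{lem:split1}\eqref{lem:split1:1} into the Stirling asymptotics~\eqref{eq:c}, observe that the case $k_1=k_2$ has vanishing total mass, and read off the Dirichlet$(-1/2,-1/2,1/2)$ density on the prescribed region. In fact the paper's ``proof'' is only the one-line remark preceding the lemma, so your write-up (including the boundary/tightness discussion) is considerably more detailed than what the paper provides.
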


\section{Proof of the Main Theorem}
\label{sec:proof}

\subsection{The Algorithm}
\label{subsec:alg}

As discussed in Subsection~\ref{subsec:heuristic}, the recursion deals only with rooted and doubly-rooted trees (the other case will follow by comparison; see Subsection~\ref{subsec:proof}).
Let $n\geq 0$ and let $T$ and $T'$ be binary trees on the same leaf set, both rooted or both doubly-rooted. Recursively, we construct a common subtree of $T$ and $T'$ containing the additional distinguished leaf (or leaves). For $n=0$, either both of $T$ and $T'$ are trees with a single vertex $\star$ or with a single edge $\star\bullet$. In this case, output the whole tree. For $n\geq 1$, we proceed inductively as follows.

If the trees are doubly-rooted, split them at their semi-centroids $b$ and $b'$ as described in Subsection~\ref{subsec:splitting}. Assume that splitting $T$ results in partitioning $\{1,\ldots,n\}$ into subsets $(A_1,A_2,A_3)$, where the order is specified in Subsection~\ref{subsec:splitting}. Define $(A'_1,A'_2,A'_3)$ similarly. $T$ induces binary trees $\bs R_1$ with leaf set $(A_1\cap A'_1)\cup\{\star,b\}$, $\bs R_2$ with leaf set $(A_2\cap A'_2)\cup\{\bullet,b\}$ and $\bs R_3$ with leaf set $(A_3\cap A'_3)\cup \{b\}$. Define $\bs R'_1,\bs R'_2$ and $\bs R'_3$ similarly. By relabeling $b$ and $b'$, $\bs R_3$ and $\bs R'_3$ can be regarded as rooted trees and the rest as doubly-rooted trees. The induction hypothesis defines output common subtrees for the pairs $(\bs R_i,\bs R'_i)$ for $i=1,2,3$. Join these trees by identifying the three distinguished leaves corresponding to $b$. It is clear that this forms a common subtree of $T$ and $T'$. This is the output of the algorithm.

If $T$ and $T'$ are rooted, split them at their centroids as described in Subsection~\ref{subsec:splitting}. Similarly to the previous case, redefine the subtrees $\bs R_i$ and $\bs R'_i$ for $i=1,2,3$, use the induction hypothesis to define three common subtrees (note that $\bs R_3$ and $\bs R'_3$ are doubly-rooted and the rest are rooted), glue them to find a common subtree of $T$ and $T'$, and output the resulting tree. This finishes the definition of the algorithm.

\subsection{Analysis of the Algorithm}
For $i=1,2$, let $\gamma(\bs T_i,\bs T'_i)$ be the size of the output of the algorithm of Subsection~\ref{subsec:alg} for the trees $\bs T_i$ and $\bs T'_i$ defined in Subsection~\ref{subsec:model}. Note that the dependence of these values on $n$ is not shown in the symbols for better readability. 

\begin{theorem}
	\label{thm:induction}
	Let $\alpha$ and $\beta$ be constants satisfying~\eqref{eq:beta}. 
	For every $\beta'<\beta$, there exists $\delta>0$ such that for all $n\geq 0$, 
	\begin{eqnarray}
		\label{eq:thm:induction:1}
		\omid{\gamma(\bs T_1,\bs T'_1)}&\geq& \delta n^{\beta'},\\
		\label{eq:thm:induction:2}
		\omid{\gamma(\bs T_2,\bs T'_2)}&\geq& \alpha \delta n^{\beta'}.
	\end{eqnarray}
\end{theorem}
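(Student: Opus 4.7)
I will prove both \eqref{eq:thm:induction:1} and \eqref{eq:thm:induction:2} simultaneously by strong induction on $n$, for any fixed $\beta'<\beta$. The constant $\delta$ will be pinned down at the end by the finitely many base cases, so the main work is the inductive step for $n$ above some threshold $N=N(\beta')$.

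For the rooted case at step $n$, apply Lemma~\ref{lem:split1} to split $\bs T_1$ and $\bs T'_1$ at their centroids, producing subtree size vectors $(\bs k_i)$, $(\bs k'_i)$ and leaf-set partitions $(A_i)$, $(A'_i)$. The algorithm's output is the gluing at $b$ of the three subalgorithm outputs on the pairs $(\bs R_i,\bs R'_i)$, where $\bs R_1,\bs R_2$ are rooted and $\bs R_3$ is doubly-rooted. By Lemma~\ref{lem:consistency} together with parts~\eqref{lem:split1:2}--\eqref{lem:split1:3} of Lemma~\ref{lem:split1}, conditionally on the leaf sets the pairs $(\bs R_i,\bs R'_i)$ are independent uniform random binary trees on $A_i\cap A'_i$ of sizes $|A_i\cap A'_i|<n$. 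Applying the induction hypothesis to each pair, then taking total expectation and dividing by $\delta n^{\beta'}$, it suffices to prove, for all $n\geq N$, that
\[
F_1(n,\beta'):=\sum_{i=1,2}\omid{(|A_i\cap A'_i|/n)^{\beta'}} + \alpha\,\omid{(|A_3\cap A'_3|/n)^{\beta'}} \geq 1.
\]

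The key step is computing $\lim_{n\to\infty} F_1(n,\beta')$. Combining the weak convergence $(\bs k_i/n)\to(U_i)$ of Lemma~\ref{lem:dirichlet1} (and its independent copy for $\bs T'_1$) with the hypergeometric law of large numbers, applied via part~\eqref{lem:split1:2} (which makes $|A_i\cap A'_i|$ hypergeometric with mean $\bs k_i\bs k'_i/n$ conditionally on sizes), yields $|A_i\cap A'_i|/n\to U_iU'_i$ in distribution. Since $x\mapsto x^{\beta'}$ is bounded continuous on $[0,1]$ and $U_i,U'_i$ are independent, one obtains $\lim_n F_1(n,\beta')=\omid{U_1^{\beta'}}^2+\omid{U_2^{\beta'}}^2+\alpha\,\omid{U_3^{\beta'}}^2$. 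Equation~\eqref{eq:rec1} divided by $C_1$ states that this limit equals $1$ at $\beta'=\beta$. Since $U_i\in[0,1/2]$ takes values strictly less than $1$ with positive probability, $\omid{U_i^{\beta'}}>\omid{U_i^{\beta}}$ whenever $\beta'<\beta$, so the limit strictly exceeds $1$, which supplies the slack needed to choose a finite $N$.

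The doubly-rooted case is structurally identical, using Lemma~\ref{lem:split2} and Lemma~\ref{lem:dirichlet2}: now $\bs R_1,\bs R_2$ are doubly-rooted and $\bs R_3$ is rooted, and after dividing by $\alpha\delta n^{\beta'}$ the required inequality becomes $\omid{V_1^{\beta'}}^2+\omid{V_2^{\beta'}}^2+\alpha^{-1}\omid{V_3^{\beta'}}^2>1$, which holds by the same monotonicity argument applied to equation~\eqref{eq:rec2}. For the base cases $n<N$, both $\omid{\gamma(\bs T_1,\bs T'_1)}$ and $\omid{\gamma(\bs T_2,\bs T'_2)}$ vanish at $n=0$ (matching $\delta\cdot 0^{\beta'}=0$) and are strictly positive for each $1\leq n<N$ (with positive probability the two random trees coincide), so $\delta$ can be chosen sufficiently small that both bounds hold throughout the base range. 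The main obstacle I expect is making the distributional-limit step fully rigorous when some $\bs k_i/n$ is close to zero, where the hypergeometric concentration is weakest; using the bounded continuous functional $x\mapsto x^{\beta'}$ on the compact $[0,1]$ is what avoids needing any explicit moment bound and lets one pass to the limit cleanly.
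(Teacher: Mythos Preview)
Your proposal is correct and follows essentially the same architecture as the paper's proof: strong induction on $n$ above a threshold $N$, with $\delta$ chosen to cover the finitely many base cases, and the inductive step driven by the decomposition $\gamma=\sum_i\gamma(\bs R_i,\bs R'_i)$, the induction hypothesis on each piece, and strict monotonicity of the limiting expressions in $\beta'$ at $\beta'=\beta$. The one technical difference is that the paper packages the convergence step as a separate lemma (Lemma~\ref{lem:induction}), proving $\omid{|A_i\cap A'_i|^{\beta'}}\geq(1-\epsilon)n^{\beta'}\omid{U_i^{\beta'}}^2$ via an explicit hypergeometric concentration bound restricted to $\epsilon n\leq a_i\leq(1-\epsilon)n$, whereas you obtain the same limit by arguing $|A_i\cap A'_i|/n\Rightarrow U_iU'_i$ and invoking bounded continuity of $x\mapsto x^{\beta'}$ on $[0,1]$; your route is slightly cleaner and, as you note, sidesteps the boundary truncation entirely.
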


\begin{remark}
	By similar arguments, one can prove that the rate is exactly $n^{\beta}$; i.e., for every $\beta''>\beta$, one has $\omid{\gamma(\bs T_i,\bs T'_i)}=O(n^{\beta''})$.
\end{remark}

We will prove this theorem by induction on $n$ according to the following motivation.
With the notation of the algorithm, one has 
\begin{equation}
	\label{eq:gamma}
	\gamma(\bs T_1,\bs T'_1) = \sum_i \gamma(\bs R_i,\bs R'_i).
\end{equation}
So, we need to bound $\sum_i \omid{\gamma(\bs R_i,\bs R'_i)}$. We will show that 
$\bs R_i$ is a uniform random binary tree on $A_i\cap A'_i$ (rooted or doubly-rooted as specified in the algorithm). Therefore, if the induction hypothesis holds, the conditional expectation of ${\gamma(\bs R_i,\bs R'_i)}$ is bounded from below by $\delta\norm{A_i\cap A'_i}^{\beta'}$ or $\alpha\delta \norm{A_i\cap A'_i}^{\beta'}$. The formal proof is based on this motivation and the following lemma.
\begin{lemma}
	\label{lem:induction}
	Fix $\beta'\geq 0$ and $\epsilon>0$.
	\begin{enumerate}[(i)]
		\item Let $A_i$ and $A'_i$ ($i=1,2,3$) be the leaf sets defined in the algorithm when starting from $\bs T_1$ and $\bs T'_1$. Then, for large enough $n$,
		\begin{equation}
			\label{eq:lem:induction:1}
			\forall i: \omid{\norm{A_i\cap A'_i}^{\beta'}} \geq (1-\epsilon)n^{\beta'} \omid{U_i^{\beta'}}^2,
		\end{equation}
		where $(U_1,U_2,U_3)$ is defined in Lemma~\ref{lem:dirichlet1}. 
		\item Similarly, if the sets are defined in the algorithm when starting from $\bs T_2$ and $\bs T'_2$, then for large enough $n$,
		\begin{equation}
			\label{eq:lem:induction:2}
				\forall i: \omid{\norm{A_i\cap A'_i}^{\beta'}} \geq (1-\epsilon)n^{\beta'} \omid{V_i^{\beta'}}^2,	
		\end{equation}
		where $(V_1,V_2,V_3)$ is defined in Lemma~\ref{lem:dirichlet2}. 
	\end{enumerate}
\end{lemma}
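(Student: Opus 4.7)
The plan is to show that the rescaled intersection size $\norm{A_i\cap A'_i}/n$ converges weakly, as $n\to\infty$, to a product of two independent copies of the corresponding Dirichlet coordinate, and then to pass to $\beta'$-moments by bounded continuous convergence. I will work out the rooted case in detail; the doubly-rooted case will be identical after replacing Lemma~\ref{lem:split1} and Lemma~\ref{lem:dirichlet1} by Lemma~\ref{lem:split2} and Lemma~\ref{lem:dirichlet2}.

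First, I would unpack the conditional structure using parts~(ii) and~(iii) of Lemma~\ref{lem:split1}: conditionally on the branch sizes $(\bs k_1,\bs k_2,\bs k_3)$, the partition $(A_1,A_2,A_3)$ is uniform, so the marginal of $A_i$ is a uniformly random subset of $\{1,\ldots,n\}$ of size $\bs k_i$, and similarly for $A'_i$ given the primed sizes. Since $\bs T_1$ and $\bs T'_1$ are independent, the two triples of branch sizes are independent, and $A_i,A'_i$ are independent given those sizes. Hence, conditional on $(\bs k_i,\bs k'_i)$, the variable $\norm{A_i\cap A'_i}$ is hypergeometric with mean $\bs k_i\bs k'_i/n$ and variance at most $n/4$.

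Next, I would combine Lemma~\ref{lem:dirichlet1} with independence to obtain $(\bs k_i/n,\bs k'_i/n)\to(U_i,U'_i)$ weakly, where $U'_i$ is an independent copy of $U_i$. The continuous mapping theorem then gives $\bs k_i\bs k'_i/n^2\to U_iU'_i$ weakly. Chebyshev applied to the variance bound above shows that $\norm{A_i\cap A'_i}/n-\bs k_i\bs k'_i/n^2\to 0$ in probability, so Slutsky's theorem yields $\norm{A_i\cap A'_i}/n\to U_iU'_i$ weakly. Since $\norm{A_i\cap A'_i}/n\in[0,1]$ and $x\mapsto x^{\beta'}$ is continuous and bounded on $[0,1]$ for $\beta'\geq 0$, the portmanteau theorem gives
\[
	\omid{(\norm{A_i\cap A'_i}/n)^{\beta'}}\ \longrightarrow\ \omid{(U_iU'_i)^{\beta'}}\ =\ \omid{U_i^{\beta'}}^2,
\]
the last equality by independence. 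Multiplying by $n^{\beta'}$ and absorbing the $o(1)$ error into the factor $(1-\epsilon)$ for all sufficiently large $n$ then yields~\eqref{eq:lem:induction:1}.

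I do not anticipate a serious obstacle here: the only mildly delicate point is ensuring the hypergeometric concentration is uniform in the (random) branch sizes, but the variance bound $n/4$ is uniform, so Chebyshev's estimate is unconditional and Slutsky combines cleanly with the Dirichlet convergence. A purely computational alternative would use Lemma~\ref{lem:split1}(i) together with Stirling's formula (via~\eqref{eq:c}) to rewrite $\omid{\norm{A_i\cap A'_i}^{\beta'}}$ as a Riemann sum converging to the corresponding Dirichlet integral; this bypasses Slutsky at the cost of more bookkeeping and does not seem preferable.
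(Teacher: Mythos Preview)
Your argument is correct and rests on the same two ingredients as the paper's proof: the hypergeometric law of $\norm{A_i\cap A'_i}$ given the branch sizes (Lemma~\ref{lem:split1}(ii)) and the Dirichlet convergence of the rescaled sizes (Lemma~\ref{lem:dirichlet1}). The packaging differs slightly. The paper conditions on the sizes, invokes a pointwise concentration inequality for the hypergeometric $\beta'$-moment (Equation~(16) of~\cite{aldous}) on the event $F_\epsilon=\{\epsilon n\leq a_i,a'_i\leq (1-\epsilon)n\}$, and then lets $\epsilon\downarrow 0$ after passing to the Dirichlet limit. You instead use the uniform variance bound $\mathrm{Var}(\norm{A_i\cap A'_i}\mid \bs k_i,\bs k'_i)\leq n/4$ and Chebyshev to get $\norm{A_i\cap A'_i}/n-\bs k_i\bs k'_i/n^2\to 0$ in probability, combine with the Dirichlet limit via Slutsky, and read off the $\beta'$-moment by portmanteau since $x\mapsto x^{\beta'}$ is bounded and continuous on $[0,1]$. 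Your route is marginally cleaner in that it avoids the explicit truncation to $F_\epsilon$ and the citation to~\cite{aldous}, and it in fact yields the full convergence $n^{-\beta'}\omid{\norm{A_i\cap A'_i}^{\beta'}}\to\omid{U_i^{\beta'}}^2$ rather than just the one-sided bound; the paper's route, on the other hand, makes the dependence on $\epsilon$ more explicit, which matches how $\epsilon$ is later used in the induction.
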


\begin{proof}
	The proof is almost the same as that of Proposition~5 of~\cite{aldous}.
	We only prove the first claim and the second one can be proved similarly. Condition on the event $E$ of $\norm{A_i}=a_i$ and $\norm{A'_i}=a'_i$ for all $i$. Fix $i\in\{1,2,3\}$. By part~\eqref{lem:split1:2} of Lemma~\ref{lem:split1}, $A_i$ and $A'_i$ are independent uniform random subsets of $\{1,\ldots,n\}$ of sizes $a_i$ and $a'_i$. So, $\norm{A_i\cap A'_i}$ has a hypergeometric distribution. Fix $\epsilon>0$ and let $F_{\epsilon}$ be the event that $a_i$ and $a'_i$ are between $\epsilon n$ and $(1-\epsilon)n$. In this regime, the hypergeometric distribution is concentrated around its mean. In particular, Equation~(16) of~\cite{aldous} gives the following: When $\epsilon$ is fixed, for large enough $n$, and for all choices of $a_i$ and $a'_i$ between $\epsilon n$ and $(1-\epsilon)n$, 
	\[
		\omidCond{\norm{A_i\cap A'_i}^{\beta'}}{E} \geq (1-\epsilon) \omidCond{\norm{A_i\cap A'_i}}{E}^{\beta'} = (1-\epsilon) \left(\frac{a_ia'_i}{n} \right)^{\beta'}.
	\]
	This implies that for large $n$,
	\[
		\omid{\norm{A_i\cap A'_i}^{\beta'}} \geq (1-\epsilon) \omid{\left(\frac{\norm{A_i}\cdot\norm{A'_i}}n\right)^{\beta'}\identity{F_{\epsilon}}}=(1-\epsilon)n^{\beta'}\omid{\left(\frac {\norm{A_i}}n\right)^{\beta'}\identity{F_{\epsilon}}}^2.
	\]
	By the convergence in Lemma~\ref{lem:dirichlet1}, one deduces that
	\[
		\liminf_n n^{-\beta'} \omid{\norm{A_i\cap A'_i}^{\beta'}} \geq (1-\epsilon)\omid{U_i^{\beta'}\identity{G_{\epsilon}}}^2,
	\]
	where $G_{\epsilon}$ is the event $\epsilon\leq U_i\leq (1-\epsilon)$. 
	Now the claim is obtained by reducing the value of $\epsilon$ suitably.
\end{proof}

We are now ready to prove Theorem~\ref{thm:induction}.

\begin{proof}[Proof of Theorem~\ref{thm:induction}]
	Fix $\beta'<\beta$. Since $\alpha$ and $\beta$ satisfy~\eqref{eq:beta} and the terms in~\eqref{eq:beta} are monotone in $\beta$ (see also~\eqref{eq:rec1} and~\eqref{eq:rec2}), there exists $\epsilon>0$ such that 
	\begin{eqnarray}
		\label{eq:epsilon1}
		\omid{U_1^{\beta'}}^2 +  \omid{U_2^{\beta'}}^2 + \alpha \omid{U_3^{\beta'}}^2 &\geq & \frac 1{1-\epsilon},\\
		\label{eq:epsilon2}
		\alpha \omid{V_1^{\beta'}}^2 + \alpha \omid{V_2^{\beta'}}^2 +  \omid{V_3^{\beta'}}^2 &\geq & \frac{\alpha}{1-\epsilon}.
	\end{eqnarray}
	For this value of $\epsilon$, by Lemma~\ref{lem:induction}, there exists $N\in\mathbb N$ such that~\eqref{eq:lem:induction:1} and~\eqref{eq:lem:induction:2} hold for all $n\geq N$. Choose $\delta>0$ such that for all $n< N$, the claims~\eqref{eq:thm:induction:1} and~\eqref{eq:thm:induction:2} of the theorem hold. 
	
	Now, we prove the claims by induction on $N$. By the definition of $\delta$, the claims hold for all $n< N$. Now, assume $n\geq N$ and both claims hold for $1,2,\ldots,n-1$. We will prove them for $n$. 
	
	First, consider the algorithm for $\bs T_1$ and $\bs T'_1$. As mentioned in~\eqref{eq:gamma}, one has $\omid{\gamma(\bs T_1,\bs T'_1)} = \sum_i \omid{\gamma(\bs R_i,\bs R'_i)}$. By part~\eqref{lem:split1:3} of Lemma~\ref{lem:split1} and the consistency property (Lemma~\ref{lem:consistency}), conditioned on knowing $A_i$ and $A'_i$, $\bs R_i$ and $\bs R'_i$ are independent random binary trees on the leaf set $A_i\cap A'_i$ (which are rooted for $i=1,2$ and doubly-rooted for $i=3$). Therefore, the induction hypothesis implies that
	\[
		\omid{\gamma(\bs T_1,\bs T'_1)} \geq \delta\omid{\norm{A_1\cap A'_1}^{\beta'}} + \delta\omid{\norm{A_2\cap A'_2}^{\beta'}} + \alpha\delta\omid{\norm{A_3\cap A'_3}^{\beta'}}. 
	\]
	Since $n\geq N$, the definition of $N$ and~\eqref{eq:lem:induction:1} give
	\[
		\omid{\gamma(\bs T_1,\bs T'_1)} \geq \delta(1-\epsilon)n^{\beta'}\left(\omid{U_1^{\beta'}}^2 +  \omid{U_2^{\beta'}}^2 + \alpha \omid{U_3^{\beta'}}^2\right). 
	\]
	By~\eqref{eq:epsilon1}, the right hand side is at least $\delta n^{\beta'}$.
	So the first induction claim~\eqref{eq:thm:induction:1} is proved. 
	
	Second, consider the algorithm for $\bs T_2$ and $\bs T'_2$. Similarly to the previous case, $\omid{\gamma(\bs T_2,\bs T'_2)} = \sum_i \omid{\gamma(\bs R_i,\bs R'_i)}$ (note that $\bs R_i, \bs R'_i, A_i$ and $A'_i$ are redefined here). Since $\bs R_i$ and $\bs R'_i$ are doubly-rooted for $i=1,2$ and rooted for $i=3$, the induction hypothesis gives similarly
	\[
		\omid{\gamma(\bs T_2,\bs T'_2)} \geq \alpha\delta\omid{\norm{A_1\cap A'_1}^{\beta'}} + \alpha\delta\omid{\norm{A_2\cap A'_2}^{\beta'}} + \delta\omid{\norm{A_3\cap A'_3}^{\beta'}}. 
	\]
	Since $n\geq N$, the definition of $N$ and~\eqref{eq:lem:induction:2} give
	\[
		\omid{\gamma(\bs T_2,\bs T'_2)} \geq \delta(1-\epsilon)n^{\beta'}\left(\alpha\omid{V_1^{\beta'}}^2 +  \alpha\omid{V_2^{\beta'}}^2 + \omid{V_3^{\beta'}}^2\right). 
	\]
	By~\eqref{eq:epsilon2}, the right hand side is at least $\alpha\delta n^{\beta'}$.
	This proves the second induction claim~\eqref{eq:thm:induction:2} and the theorem is proved.
\end{proof}

\subsection{Proof of Theorem~\ref{thm:main}}
\label{subsec:proof}

Theorem~\ref{thm:induction} is the analogous of Theorem~\ref{thm:main} for rooted and doubly-rooted trees. We will deduce Theorem~\ref{thm:main} from the following comparison lemma.

\begin{lemma}
	\label{lem:coupling}
	There exists a coupling of $(\bs T_0, \bs T'_0)$ and $(\bs T_1, \bs T'_1)$ such that $\kappa(\bs T_0,\bs T'_0)\geq\kappa(\bs T_1,\bs T'_1)$.
\end{lemma}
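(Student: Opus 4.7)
The plan is to build the coupling by the obvious ``forget the distinguished leaf'' operation and then verify the inequality via the consistency/restriction property.

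\emph{Construction of the coupling.} I would take $\bs T_1$ and $\bs T'_1$ to be the original independent uniform random binary trees with leaf sets $\{1,\ldots,n,\star\}$, and then define
\[
\bs T_0 := \restrict{\bs T_1}{\{1,\ldots,n\}}, \qquad \bs T'_0 := \restrict{\bs T'_1}{\{1,\ldots,n\}}.
\]
By the consistency property (Lemma~\ref{lem:consistency}) applied with $B=\{1,\ldots,n\}$, each of $\bs T_0$ and $\bs T'_0$ is uniform on non-rooted binary trees with leaf set $\{1,\ldots,n\}$, and they are independent since $\bs T_1$ and $\bs T'_1$ are. So this is a valid coupling with the prescribed marginals.

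\emph{Verification of the inequality.} Let $A\cup\{\star\}\subseteq\{1,\ldots,n,\star\}$ be the leaf set of a largest common subtree of $\bs T_1$ and $\bs T'_1$, so that $A\subseteq\{1,\ldots,n\}$ and $|A|=\kappa(\bs T_1,\bs T'_1)$. By definition of common subtree, $\restrict{\bs T_1}{A\cup\{\star\}}$ and $\restrict{\bs T'_1}{A\cup\{\star\}}$ are equivalent, that is, there is a graph isomorphism between them that fixes the labels in $A\cup\{\star\}$. Restricting this further to the leaf set $A$ (which amounts to forgetting $\star$ and suppressing the resulting degree-$2$ vertex in each tree) yields a label-preserving isomorphism between $\restrict{\bs T_1}{A}$ and $\restrict{\bs T'_1}{A}$. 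The key book-keeping point is the obvious transitivity of the restriction operation: for $A\subseteq B\subseteq$ leaves, one has $\restrict{\restrict{T}{B}}{A}=\restrict{T}{A}$, because the subtree spanned by $A$ is unaffected by suppressing degree-$2$ vertices outside it. Applying this with $B=\{1,\ldots,n\}$ and $B=A\cup\{\star\}$ gives
\[
\restrict{\bs T_0}{A}=\restrict{\bs T_1}{A}=\restrict{\restrict{\bs T_1}{A\cup\{\star\}}}{A},
\]
and similarly for the primed tree. So $\restrict{\bs T_0}{A}$ and $\restrict{\bs T'_0}{A}$ are equivalent, meaning $A$ is the leaf set of a common subtree of $\bs T_0$ and $\bs T'_0$. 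Hence $\kappa(\bs T_0,\bs T'_0)\geq|A|=\kappa(\bs T_1,\bs T'_1)$, as required.

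\emph{Where the mild care is needed.} There is no genuine obstacle; the only thing one has to check carefully is that the restriction operation is transitive in the sense above, and that the natural graph isomorphism between $\restrict{\bs T_1}{A\cup\{\star\}}$ and $\restrict{\bs T'_1}{A\cup\{\star\}}$ descends to a label-preserving isomorphism after one further removes $\star$. Both facts follow directly from the definition of $\restrict{\cdot}{\cdot}$ (take the spanned subtree, then suppress vertices of degree $2$), and together they give the lemma in a few lines once the coupling $\bs T_0=\restrict{\bs T_1}{\{1,\ldots,n\}}$ is written down.
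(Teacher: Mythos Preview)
Your proof is correct and follows exactly the same approach as the paper: define the coupling by $\bs T_0=\restrict{\bs T_1}{\{1,\ldots,n\}}$ (and similarly for $\bs T'_0$), invoke the consistency property to check the marginals, and observe that removing $\star$ from a common subtree of $\bs T_1,\bs T'_1$ yields a common subtree of $\bs T_0,\bs T'_0$. The paper states this in two sentences; your version just spells out the transitivity of restriction more explicitly.
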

\begin{proof}
	By the consistency property (Lemma~\ref{lem:consistency}),
	if one removes the distinguished leaf $\star$ from $\bs T_1$, the resulting tree has the same distribution as $\bs T_0$. This provides a coupling of  $(\bs T_0, \bs T'_0)$ and $(\bs T_1, \bs T'_1)$. In this coupling, every common subtree of $\bs T_1$ and $\bs T'_1$ is also a common subtree for $\bs T_0$ and $\bs T_0$ (after removing $\star$). This proves the claim.
\end{proof}

\begin{proof}[Proof of Theorem~\ref{thm:main}]
	Subsection~\ref{subsec:alg} provides an algorithm for producing a common subtree of $\bs T_1$ and $\bs T'_1$ with size $\gamma(\bs T_1,\bs T'_1)$. So $\kappa(\bs T_1,\bs T'_1)\geq \gamma(\bs T_1,\bs T'_1)$. Now the claim is implied from Theorem~\ref{thm:induction} and Lemma~\ref{lem:coupling}.
\end{proof}

\begin{remark}
	Similarly to the above proof, one can simplify the proof of Theorem~1 of~\cite{aldous} by replacing the submartingale argument with a simple induction. In fact, the induction argument is basically using one step of the submartingale (instead of stopping it at a stopping time).
\end{remark}

\section*{Acknowledgements}
This work was supported by the ERC NEMO grant, under the European Union's Horizon 2020 research and innovation programme, grant agreement number 788851 to INRIA.

\bibliography{bib} 
\bibliographystyle{plain}

\end{document}